\theoremstyle{plain}
    \newtheorem{theorem}{Theorem}[section]
    \newtheorem{lemma}[theorem]{Lemma}
    \newtheorem{proposition}[theorem]{Proposition}
\theoremstyle{definition}
    \newtheorem{definition}[theorem]{Definition}
\theoremstyle{remark}
    \newtheorem{remark}[theorem]{Remark}
\numberwithin{equation}{section}
\DeclareMathOperator{\Ind}{Ind}
\DeclareMathOperator{\Spin}{Spin}
\DeclareMathOperator{\Spinc}{\Spin^c}
\DeclareMathOperator{\Ad}{Ad}
\begin{document}

\newcommand{\EE}{\mathcal{E}}
\newcommand{\foliation}{\mathscr{F}}

\newcommand{\kt}{\mathfrak{t}}
\newcommand{\TTc}{\mathbb{T}^c}
\newcommand{\NN}{\mathcal{N}}
\newcommand{\XX}{\mathbb{X}}
\newcommand{\HH}{\mathbb{H}}
\newcommand{\LL}{\mathbb{L}}
\newcommand{\Econn}{\mathbb{E}}
\newcommand{\Fconn}{\mathbb{F}}
\newcommand{\Nconn}{\mathbb{N}}
\newcommand{\bNconn}{\overline{\mathbb{N}}}
\newcommand{\bNconnd}{\overline{\mathbb{N}}^\vee}
\newcommand{\NNconn}{\mathfrak{N}}
\newcommand{\EEconn}{\mathfrak{E}}
\newcommand{\Conn}{\mathfrak{Conn}}
\newcommand{\Edot}{E^{\bullet}}
\newcommand{\Mod}{\operatorname{Mod}}
\newcommand{\CC}{\mathcal{C}}
\newcommand{\real}{\mathbb{R}}
\newcommand{\complex}{\mathbb{C}}
\newcommand{\OO}{\mathscr{O}}
\newcommand{\PP}{\mathcal{P}}
\newcommand{\A}{\mathcal{A}}
\newcommand{\F}{\mathcal{F}}
\newcommand{\M}{\mathcal{M}}
\newcommand{\ii}{\sqrt{-1}}
\newcommand{\kh}{\mathfrak{h}}

\newcommand{\Z}{\mathbb{Z}}
\newcommand{\C}{\mathbb{C}}
\newcommand{\R}{\mathbb{R}}
\newcommand{\D}{\mathbb{D}}
\newcommand{\g}{\mathfrak{g}}
\newcommand{\uu}{\mathbf{u}}

\newcommand{\Adot}{\mathcal{A}^{\bullet}}
\newcommand{\PA}{\mathcal{P}_{A}}
\newcommand{\gext}{\operatorname{Ext}}
\newcommand{\lext}{\mathscr{Ext}}
\newcommand{\Ho}{\operatorname{Ho}}
\newcommand{\Hom}{\operatorname{Hom}}
\newcommand{\End}{\operatorname{End}}
\newcommand{\Id}{\operatorname{Id}}
\newcommand{\dbar}{\overline{\partial}}
\newcommand{\expmap}{\operatorname{exp}}
\newcommand{\normal}{N_{\scriptscriptstyle{X/Y}}}
\newcommand{\conormal}{N^\vee_{\scriptscriptstyle{X/Y}}}
\newcommand{\tnormal}{N_\otimes}
\newcommand{\btnormal}{\overline{N}_\otimes}
\newcommand{\btconormal}{\overline{N}^\vee}
\newcommand{\bcnormal}{\overline{\mathcal{N}}}
\newcommand{\bcconormal}{\overline{\mathcal{N}}^\vee}
\newcommand{\vdiff}{\partial^{\mathcal{V}}}
\newcommand{\Dcohb}{\mathcal{D}^b_{coh}}
\newcommand{\Dcoh}{\mathcal{D}_{coh}}
\newcommand{\dbarsharp}{\dbar^\sharp}
\newcommand{\codim}{\operatorname{codim}}
\newcommand{\substr}{\operatorname{Substr}}
\newcommand{\sgn}{\operatorname{sgn}}
\newcommand{\Ins}{\operatorname{Ins}}
\newcommand{\formal}{^{\scriptscriptstyle{(\infty)}}}
\newcommand{\Xformal}{{X\formal_Y}}
\newcommand{\invlim}{\varprojlim}
\newcommand{\frechet}{Fr$\acute{e}$chet}
\newcommand{\Cinf}{$C^\infty$}
\newcommand{\embed}{i_{Y/X}}
\newcommand{\Spec}{\operatorname{Spec}_\complex}
\newcommand{\aaa}{\mathfrak{a}}
\newcommand{\Xdiag}{X_{X \times X}\formal}
\newcommand{\Shat}{\hat{S}}
\newcommand{\projtan}{\tau}
\newcommand{\liftnormal}{\rho}
\newcommand{\normalconn}{\nabla^\bot}
\newcommand{\normalconnbar}{\overline{\nabla}^\bot}
\newcommand{\TmapYX}{\mathcal{T}_{Y/X}}
\newcommand{\Dnormal}{\mathfrak{D}}
\newcommand{\symmconn}{\overline{\nabla}}
\newcommand{\Sym}{\operatorname{Sym}}
\newcommand{\Sh}{\operatorname{Sh}}
\newcommand{\Aut}{\operatorname{Aut}}
\newcommand{\Spf}{\operatorname{Spf}}
\newcommand{\GL}{\mathbf{GL}}
\newcommand{\Jet}{\mathcal{J}^\infty}
\newcommand{\JetX}{\mathcal{J}^\infty_X}
\newcommand{\graded}{\operatorname{gr}}
\newcommand{\Lie}{\mathcal{L}}
\newcommand{\EuclideanConn}{\nabla_\mathfrak{E}}
\newcommand{\To}{\longrightarrow}

\newcommand{\upscript}[1]{{\scriptscriptstyle{#1}}}

\newcommand{\contract}{\llcorner}
\newcommand{\normalbar}{\overline{N}}
\newcommand{\conormalbar}[1]{\normalbar^{{\upscript{\vee} #1}}}
\newcommand{\normalext}{\overline{\mathcal{N}}}
\newcommand{\conormalext}{\normalext^{\upscript{\vee}}}
\newcommand{\Ebar}{\overline{E}}

\newcommand{\Nconnbar}[1]{\bNconn^{\upscript{\vee #1}}}
\newcommand{\Econnbar}{\overline{\Econn}}

\newcommand{\beq}{\begin{equation}}
\newcommand{\eeq}{\end{equation}}

\newcommand{\Res}{\operatorname{Res}}

\newcommand{\IndG}{\Ind_G}

\setcounter{tocdepth}{1}

% \newarrow{Equal} =====

%\newcommand{\mathbb{R}}{\mathbb{R}
%\newcommand{\MM}{\mathcal{M}}
%\newcommand{\UU}{\mathcal{U}}
%\newcommand{\dd}{\:\text{d}}
%\newcommand{\mathbb{N}}{\mathbb{N}}
%\newcommand{\PP}{\mathcal{P}}
%\newcommand{\BB}{\mathcal{B}}
%\newcommand{\ZZ}{\mathbb{Z}}
%\newcommand{\mathbb{T}}{\mathbb{T}}

\title{On the Vergne conjecture}
\date{\today}
\author{Peter Hochs
\hspace{0mm}
 and Yanli Song}

\begin{abstract}
Consider a Hamiltonian action by a compact Lie group on a possibly non-compact symplectic manifold.
We give a short proof of a geometric formula for decomposition into irreducible representations of the equivariant index of a $\Spinc$-Dirac operator in this context. This formula was conjectured by Mich{\`e}le Vergne in 2006 % \cite{MR2334206} 
and proved by Ma and Zhang in 2014. 
%\cite{Zhang14}. 
\end{abstract}

\maketitle

\tableofcontents

\section{Introduction}

% \subsection*{Basic setup}
Let us consider a Hamiltonian action by a compact, connected Lie group $G$, with Lie algebra $\g$, on a possibly non-compact symplectic manifold $(M, \omega)$ with equivariant moment map $\mu : M \to \g^{*}$.  We  assume that $(M, \omega)$ is \emph{pre-quantisable}, that is, there exists a Hermitian line bundle $E$ with a $G$-invariant Hermitian connection $\nabla^{E}$ such that 
\begin{equation}
\label{line bundle}
\frac{\sqrt{-1}}{2\pi} (\nabla^{E})^{2} = \omega.
\end{equation}

We fix a $G$-invariant almost complex structure $J$ on $M$ so that 
\beq \label{eq g omega J}
g^{TM}(X, Y) = \omega(X, JY),  \hspace{5mm} X, Y \in TM,
\eeq
defines a Riemmanian metric on $M$. The almost complex structure $J$ determines a $G$-equivariant $\Z_{2}$-graded  spinor bundle
\[
S_M^{\pm} = \wedge^{0, \mathrm{even/odd}}T^{*}M,
\]
with \emph{Clifford multiplication} denoted by 
\[
c: TM \to \mathrm{End}(S_M).
\] 

Let  $\nabla^{S_M}$ be a Hermitian Clifford connection  on $S_M$, preserving $S_M^\pm$. One has the tensor product connection
\[
\nabla := \nabla^{S_M} \otimes 1 + 1 \otimes \nabla^{E}
\]
 on $S_M \otimes E$. The associated  Spin$^c$-Dirac operator $D^E$  is defined by the following composition:
\[
\Gamma(M, S_M \otimes E)  \xlongrightarrow{\nabla} \Gamma(M, T^*M \otimes S_M \otimes E) \xlongrightarrow{c} \Gamma(M, S_M \otimes E). 
\]
Here $D^E$ anticommutes with the $\Z_2$-grading, so that one has the operators
\[
 D^E_{\pm}: \Gamma(M, S_M^\pm \otimes E) \to \Gamma(M, S_M^\mp \otimes E).
\] 

By identifying $\g^*\cong \g$ via an $\Ad$-invariant inner product, we also view $\mu$ as a map into $\g$. This map induces a vector field $X^{\mu}$ on $M$ via the infinitesimal action. Suppose the set $\{X^{\mu}=0\}$ of zeroes of $X^{\mu}$ is compact. In Section \ref{sec Braverman}, we review Braverman's equivariant index $\Ind_G(M, \mu)$ of the deformed Dirac operator
\[
D_{\mu} := D^E- \sqrt{-1}\cdot f \cdot c(X^{\mu}),
\]
where $f \in C^{\infty}(M)^G$ satisfies certain growth conditions.

Let $\lambda$ be the highest weight of an irreducible representation $\pi_{\lambda}$ of $G$, for choices of a maximal torus and positive roots. Consider the reduced space $M_{\lambda} := \mu^{-1}(G\cdot \lambda)/G$. Suppose $\mu$ is proper. Then $M_{\lambda}$ is compact, so one can define the index $\Ind(M_{\lambda}) \in \Z$ of a Dirac operator on $M_{\lambda}$, and even make sense of this if $\lambda$ is a singular value of $\mu$. Vergne conjectured in her 2006 ICM plenary lecture \cite{MR2334206} that 
\begin{equation} \label{eq vergne conj intro}
\boxed{
\Ind_G(M, \mu)_{\lambda} = \Ind(M_{\lambda}),
}
\end{equation}
where we will always use a subscript $\lambda$ to denote the multiplicity of
 $\pi_{\lambda}$. This is a generalisation of the  quantisation commutes with reduction principle \cite{Guillemin82, Meinrenken98, Meinrenken99, Paradan01, Zhang98} to noncompact manifolds.

A special case of the Vergne conjecture, related to  discrete series representations of semi-simple Lie groups, was studied by Paradan \cite{Paradan03}. A generalisation of the Vergne conjecture,  where the set $\{X^{\mu}=0\}$ is not required to be compact, was first proved by Ma and Zhang \cite{Zhang14}. Later, Paradan gave a different proof \cite{Paradan11}.  This result was extended to $\Spinc$-manifolds in \cite{Song-spinc}.

Our goal in the current paper is to give a short proof of the Vergne conjecture, Theorem \ref{main theorem}. 
% This proof is based on ideas from \cite{Song-spinc}, which involve cobordism invariance of Braverman's index.
Many ideas we will use to prove the Vergne conjecture overlap with those used in the $\Spinc$ setting in Section 5 of \cite{Song-spinc}. The reason the argument can be simplified in the symplectic case is that one has the equality \eqref{eq vergne conj intro} for $\lambda = 0$ to begin with (see
 Theorem \ref{reduction at 0}), which is not true in the $\Spinc$ case.
% (The $\Spinc$ analogue of this result only holds for tori, seeProposition 5.8 in \cite{}.)

In \cite{Zhang14, Paradan11}, it is not asumed that $\{X^{\mu}=0\}$ is compact, just that $\mu$ is proper. This generalisation is natural, because one needs to allow noncompact vanishing sets for the crucial multiplicativity property of the index in \cite{Zhang14, Paradan11}, even if $\{X^{\mu}=0\}$ is compact for the initial moment map $\mu$. We are able to avoid this issue, and work with compact vanishing sets, by only proving multiplicativity of the invariant part of the index, as in Section \ref{sec proof}. This proof is based on Braverman's cobordism invariance, and allows us to keep the proof of the Vergne conjecture short.

\subsection*{Acknowledgements}

The authors would like to thank Maxim Braverman, Nigel Higson,  Paul-\'Emile Paradan,  Eckhard Meinrenken and Mich{\`e}le Vergne for many useful discussions. Special thanks go to Xiaonan Ma and Weiping Zhang for proposing this topic and kind help. The first author was supported by the European Union, through Marie Curie fellowship PIOF-GA-2011-299300.

%The second author would like  Maxim Braverman, Nigel Higson,  Paul-\'Emile Paradan,  Eckhard Meinrenken and Mich{\`e}le Vergne for many benefited discussions. Special thanks go to Xiaonan Ma and Weiping Zhang for proposing this topic and warm helps. 

\section{Braverman's index} \label{sec Braverman}

%We equip the Lie algebra $\g$ with an $\mathrm{Ad}$-invariant inner product so that we can identify $\g$ and its dual. Then the moment map 
%\[
%\mu : M \to \g^{*} \cong \g
%\]
%induces the Kirwan vector field, defined by 
%\[
%X^{\mu}_m := \frac{d}{dt}\big|_{t=0} \exp(- t \cdot \mu(m)) \cdot m, 
%\]
%for $m\in M$.

Let $\phi: M \to \g$ be an equivariant map (with respect to the adjoint action by $G$ on $\g$). It induces the vector field $X^{\phi}$ on $M$ defined by 
\[
X^{\phi}(m) := \left. \frac{d}{dt}\right|_{t=0} \exp(- t \cdot \phi(m)) \cdot m, 
\]
for $m\in M$.
\begin{definition}
The map $\phi$ is
 \emph{taming} 
% over $M$ 
if the vanishing set $\{X^\phi = 0\} \subseteq M$ is compact. 
\end{definition}

Consider the deformed Dirac operator
\begin{equation} \label{eq deformed Dirac}
D_{\phi} = D^E- \sqrt{-1}\cdot f \cdot c(X^\phi),
\end{equation}
where $f\colon M \to [0, \infty[$ is a $G$-invariant smooth function which grows fast enough towards infinity. This is a so-called admissible function introduced by Braverman;  for the precise growth condition see  Definition 2.6 in \cite{Braverman02}. 
In \cite{Braverman02}, Braverman defined an equivariant index of the deformed Dirac operator \eqref{eq deformed Dirac}, for taming maps $\phi$, in a more general context. He proved a cobordism invariance property of this index, which is the crucial ingredient of the arguments in this paper. 

Braverman considered \emph{complete} Riemannian manifolds. 
The manifolds we will consider (such as open subsets of a given manifold) may not be complete a priori. In such cases (possibly assuming the boundary of the manifold to be regular enough) one can make the manifold complete by rescaling the Riemannian metric by a positive, $G$-invariant function on $M$, which equals  one in a neighbourhood of the zeroes of $X^{\phi}$. (See Section 4.2 in \cite{Braverman02} for more details.) The resulting index is independent of the function used to rescale the metric. When dealing with non-complete manifolds in the context of Braverman's index, we will always tacitly perform this rescaling, and choose open sets so that this is possible. After rescaling, the Riemannian metric is only given by the symplectic form and the almost complex structure as in \eqref{eq g omega J} in a neighbourhood of the zeroes of $X^{\phi}$, but that is enough for our arguments.

The results from \cite{Braverman02} that we will use are summarised in the following theorem. We will write $\widehat{R}(G) = \Hom_{\Z}(R(G), \Z)$ for the completion of the representation ring of $G$. Let $T$ be a maximal torus of $G$, and $\kt$ its Lie algebra.  % and $\kt^*$ the dual of $\kt$. 
Let $\Lambda^*_+ \subset \kt^*$ be the set of dominant weights, for a choice of positive roots. For $\lambda \in \Lambda^*_+$, let $\pi_{\lambda}$ be the irreducible representation of $G$ with highest weight $\lambda$. We will write $D_{\phi}^{\pm}$ for the restriction of $D_{\phi}$ to $\Gamma^{\infty}(S_M^{\pm}\otimes E)$.
\begin{theorem}[\cite{Braverman02}]\label{braverman index}
If the  map $\phi$ is taming, then the deformed Dirac operator $D_{\phi}$ has the following properties. 
\begin{enumerate}
\item
The kernel of  $D_{\phi}$ decomposes, as a unitary representation of $G$, into an infinite direct sum 
\[
\mathrm{ker}D_{\phi}^\pm = \hat \bigoplus_{\lambda \in \Lambda^*_+} m_{\lambda}^\pm \cdot \pi_{\lambda},
\]
where $m_{\lambda}^\pm$ is a nonnegative integer for every $\lambda$.
\item
The index
\[
\IndG(M, \phi) := \sum_{\lambda \in \Lambda^*_+} (m_{\lambda}^+ - m_{\lambda}^-) \cdot \pi_{\lambda} \quad \in \widehat{R}(G)
\]
is independent of the choices of the admissible function $f$, and the connection $\nabla^{S_M}$.  
\item
If $U$ is a $G$-invariant open subset of $M$ so that 
\[
\{X^\phi = 0\}\subseteq U \subseteq M,
\]
then 
\[
\IndG(M, \phi) = \IndG(U, \phi|_U) \quad  \in \widehat{R}(G). 
\]
\item
If $(\phi^t)_{t\in [0,1]} : M \times [0,1] \to \g$ is a smooth family of equivariant maps, which is taming over $M \times [0,1]$, and constant in $t$ on $M\times [0, \epsilon[$ and $M\times ]1-\epsilon, 1]$ for an $\epsilon > 0$, 
then
\[
\IndG(M, \phi^0) = \IndG(M, \phi^1).
\]
\end{enumerate}
\end{theorem}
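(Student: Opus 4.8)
The plan is to derive all four properties from the single structural fact that $D_\phi^2$ is \emph{uniformly positive outside a compact set}. First I would compute the square by the Bochner--Weitzenb\"ock method. Since $\sqrt{-1}\,c(X^\phi)$ is a self-adjoint bundle endomorphism, $D_\phi$ is formally self-adjoint and
\[
D_\phi^2 = (D^E)^2 + f^2\,|X^\phi|^2 + R_f ,
\]
where $R_f$ is a bundle endomorphism depending linearly on $f$ and on $df$ (it assembles the Clifford contractions of $df$ and of $\nabla X^\phi$). Braverman's admissibility condition on $f$ is designed precisely so that the term $f^2|X^\phi|^2$ dominates $R_f$ away from the compact vanishing set $\{X^\phi=0\}$; hence $D_\phi^2 \geq \psi$ outside a compact set for some proper function $\psi$ tending to $+\infty$. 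On a complete Riemannian manifold this yields, by Chernoff's argument, essential self-adjointness of $D_\phi$, and discreteness of the spectrum of $D_\phi^2$ below any given level with finite-dimensional eigenspaces.

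For property (1): $D_\phi$ commutes with the unitary $G$-action, so every eigenspace of $D_\phi^2$, in particular $\ker D_\phi^\pm$, is a unitary $G$-representation; restricting the coercivity estimate to the subspace of sections of a fixed isotype $\pi_\lambda$ shows the multiplicity of $\pi_\lambda$ in $\ker D_\phi^\pm$ is finite, giving the integers $m_\lambda^\pm$ and the Hilbert direct sum, so that $\sum_\lambda(m_\lambda^+-m_\lambda^-)\pi_\lambda$ lies in $\widehat{R}(G)$. For properties (2) and (4): given two admissible functions (resp. two Clifford connections, resp. a taming homotopy $\phi^t$), interpolate linearly; admissibility is preserved along the path, so one obtains a continuous family of operators with uniformly coercive squares, each isotype contributes a continuous field of finite-dimensional graded subspaces, and the $G$-equivariant index of this $\Z_2$-graded family is locally constant in the parameter, hence constant. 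For the homotopy statement, the hypothesis that $\phi^t$ is constant near $t=0,1$ only serves to identify the endpoints of the family with the operators $D_{\phi^0}$ and $D_{\phi^1}$ themselves.

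The property that is not a soft consequence of coercivity is the excision statement (3), and I expect this to be the main obstacle. One wants to know that $\ker D_\phi$ is concentrated near $\{X^\phi=0\}\subseteq U$. Choosing the admissible function on $U$ so that it blows up along $\partial U$ --- using the completion-by-rescaling trick recalled before the theorem, which lets one treat $\partial U$ as ``infinity'' --- one arranges that $D_\phi$ on $U$ and $D_\phi$ on $M$ both have Weitzenb\"ock positivity outside a common compact neighbourhood of the vanishing set, and then a relative-index / splitting argument in the spirit of Gromov--Lawson shows the two indices agree, because they differ by a contribution supported away from $\{X^\phi=0\}$ which is index-trivial. Making this comparison on the overlap precise, together with the attendant estimates, is the technical heart; it is carried out in detail in \cite{Braverman02}, and the cobordism-invariance form of (3)--(4) is exactly the input we rely on in the rest of this paper.
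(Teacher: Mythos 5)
This theorem is not proved in the paper at all: it is stated as a summary of results from \cite{Braverman02}, and the authors rely on Braverman's proofs (their own text only explains the rescaling-to-completeness convention). So the relevant comparison is between your sketch and Braverman's actual argument, and there your ``single structural fact'' is misstated in a way that matters. Writing $D_\phi = D^E - \sqrt{-1} f c(X^\phi)$, the cross term in $D_\phi^2$ is $-\sqrt{-1}\{D^E, f c(X^\phi)\}$, and the anticommutator $\{D^E, c(X^\phi)\}$ is a \emph{first-order} operator: up to zeroth-order terms it is $-2\nabla_{X^\phi}$. Hence $R_f$ is not a bundle endomorphism, and $f^2|X^\phi|^2$ cannot dominate it uniformly on $L^2$; consequently $D_\phi^2$ is \emph{not} uniformly positive outside a compact set, and your conclusion that the spectrum of $D_\phi^2$ is discrete below any level with finite-dimensional eigenspaces is false --- it contradicts part (1) of the very statement, which asserts that $\ker D_\phi^\pm$ is in general an infinite direct sum, hence infinite-dimensional. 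The actual mechanism (Tian--Zhang, Braverman) is that $\nabla_{X^\phi}$ differs from the Lie derivative $\mathcal{L}_{X^\phi}$ by a zeroth-order term (Kostant-type formula), and $\mathcal{L}_{X^\phi}$ is bounded on each fixed $\pi_\lambda$-isotypic component by a constant depending on $\lambda$; Braverman's admissibility condition on $f$ is tailored so that $f^2|X^\phi|^2$ dominates the remaining terms \emph{isotype-wise}, with $\lambda$-dependent constants. This is exactly why the index lands in $\widehat{R}(G)$ rather than $R(G)$, and it is the point your sketch elides. You do restrict to a fixed isotype when counting multiplicities, but the estimate you restrict is the flawed uniform one, and the same flaw propagates into your continuity argument for (2) and (4), which must also be run isotype by isotype with the corrected estimate (Braverman in fact deduces (2)--(4) from his cobordism invariance rather than from a continuity-of-kernels argument).

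For (3) you defer to \cite{Braverman02}, which is legitimate here since the paper under review does the same for the entire theorem; but as a proof proposal this leaves the one genuinely hard statement unproved, and the Gromov--Lawson-style splitting you gesture at would still have to be implemented with the isotype-wise (not uniform) coercivity just described.
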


\begin{remark} \label{rem other defs}
To define a $G$-equivariant index $\IndG(M, \phi)$ for a taming map $\phi$, one can also use Atiyah's index of transversally elliptic symbols as  in \cite{Paradan01, Paradan11}, or an APS-type index as in \cite{Zhang14}. They are all consistent, see Theorem 5.5 in \cite{Braverman02} and Theorem 1.5 in \cite{Zhang14}. 
\end{remark}

In what follows, we will study Braverman's index for $\phi = \mu$, where we identify $\g \cong \g^*$ via a fixed $\Ad$-invariant inner product.

% From now on, we assume in addition that the moment map $\phi$ is proper over $M$. 

\section{The main result: Vergne's conjecture} \label{sec Vergne}

Take $\lambda \in \Lambda^*_+$.  By identifying $\kt^* \cong \ii\kt^*$ via multiplication by $\ii$, we view $\lambda$ as an element of $\kt^*$. If $\lambda$ is a regular value of the moment map $\mu$, then one can construct the Marsden-Weinstein symplectic reduction $(M_\lambda, \omega_\lambda)$, with $M_\lambda = \mu^{-1}(G \cdot \lambda)/G$ being a compact symplectic orbifold provided that $\mu$ is proper. Moreover, the pre-quantum line bundle $E$ as well as the almost complex structure induce pre-quantum line bundle $E_{\lambda}$ and almost complex structure $J_{\lambda}$ on the reduced space $(M_\lambda, \omega_\lambda)$. Hence, one can define the orbifold index \cite{Kawasaki81} $\Ind(M_{\lambda}) \in \Z$ of a Spin$^c$-Dirac operator on $M_\lambda$. 

If $\lambda$ is not a regular value of $\mu$, then one can show that for generic $\epsilon \in \g$ such that $\lambda + \epsilon \in \mu(M)$, this element $\lambda + \epsilon$ is a regular value. Furthermore, the integer $\Ind(M_{\lambda+ \epsilon})$ is independent of small enough $\epsilon$. (See Theorem 2.5 in \cite{Meinrenken99} or Theorem C in \cite{Paradan01}.) One then defines
\[
\Ind(M_{\lambda}) := \Ind(M_{\lambda + \epsilon}) \in \Z,
\]
for an $\epsilon$ as above.

\begin{theorem}\label{reduction at 0}
Let  $(M, \omega)$ be a Hamiltonian $G$-space with pre-quantum line bundle $E$ and taming moment map $\mu$. If $0 \not\in \mu(M)$, then
\[
\IndG(M, \mu)_0 = 0.
\]
If  $0 \in \mu(M)$, then
\beq \label{QR at 0}
\IndG(M, \mu)_0 = \Ind(M_0) \quad \in \Z.
\eeq
\end{theorem}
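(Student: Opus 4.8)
The plan is to peel off the contribution of $\mu^{-1}(0)$ to the invariant part of Braverman's index by excision, and then to identify that contribution with $\Ind(M_0)$ by reducing to the compact quantisation-commutes-with-reduction theorem. First note that $\mu^{-1}(0) \subseteq \{X^{\mu} = 0\}$, because $X^{\mu}(m)$ depends only on $\mu(m)$ and vanishes when $\mu(m) = 0$; hence $\mu^{-1}(0)$ is closed in the compact set $\{X^{\mu} = 0\}$, so it is compact, $M_0$ is compact, and $\Ind(M_0)$ is defined. Since $d\|\mu\|^{2}_m = 0$ exactly when $X^{\mu}(m) = 0$, the set $\{X^{\mu} = 0\}$ is the critical set of $\|\mu\|^2$, on whose connected components $\|\mu\|^2$ is constant (Kirwan's lemma); being compact, it splits $G$-invariantly as $\mu^{-1}(0) \sqcup Z'$ with $Z'$ compact and $\|\mu\| \ge \delta$ on $Z'$ for some $\delta > 0$. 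Choose disjoint $G$-invariant open sets $U \supseteq \mu^{-1}(0)$ and $U' \supseteq Z'$ with $\{X^{\mu} = 0\} \subseteq U \sqcup U'$, $\{X^{\mu}=0\}\cap U = \mu^{-1}(0)$, and $0 \notin \mu(U')$. Theorem \ref{braverman index}(3), together with additivity of the index over disjoint unions, gives
\[
\IndG(M, \mu) \;=\; \IndG(U, \mu|_U) \;+\; \IndG(U', \mu|_{U'}).
\]

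\textbf{Vanishing off the zero level.} I claim $\IndG(U', \mu|_{U'})_0 = 0$. This is the vanishing half of Witten-type nonabelian localisation: near a component $G \cdot (\mu^{-1}(\beta) \cap M^{\beta})$ of $Z'$ with $\beta \neq 0$, the prequantisation condition \eqref{line bundle} forces the corresponding part of $\ker D_{\mu}$ to be a direct sum of $\pi_{\lambda}$ with $\lambda \neq 0$, so $\pi_0$ does not occur; alternatively one invokes \cite{Braverman02, Paradan01, Paradan11}. Hence $\IndG(M, \mu)_0 = \IndG(U, \mu|_U)_0$. If $0 \notin \mu(M)$ then $U = \emptyset$, which proves the first assertion.

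\textbf{Identification near $\mu^{-1}(0)$.} Assume $0 \in \mu(M)$; it remains to prove $\IndG(U, \mu|_U)_0 = \Ind(M_0)$. By the Marle--Guillemin--Sternberg normal form, after shrinking $U$ there is a $G$-equivariant symplectomorphism of $U$ onto a $G$-invariant neighbourhood of $\mu^{-1}(0)$ in an explicit model $Y$ (for a regular value $0$: $Y = \mu^{-1}(0) \times \g^{*}$ with the diagonal $G$-action, the minimal coupling form, and moment map the projection to $\g^{*}$), for which $\{X^{\mu_Y} = 0\} = \mu^{-1}(0)$ is compact. Transporting the prequantum data to $Y$ and using Theorem \ref{braverman index}(3) again gives $\IndG(U, \mu|_U) = \IndG(Y, \mu_Y)$, and on $Y$ the $G$-invariant part of the index is computed directly: the Witten deformation in the $\g^{*}$-directions contributes a Bott-type factor with $G$-invariant index $1$, leaving the index of the $\Spinc$-Dirac operator on $M_0$ twisted by $E_0$, that is $\Ind(M_0)$. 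For a regular value this is in essence Guillemin--Sternberg's original computation; when $0$ is singular one uses the defining identity $\Ind(M_0) = \Ind(M_{\epsilon})$ and the corresponding shift of the reduction on $Y$, exactly as in \cite{Meinrenken99, Paradan01}, to reduce to the regular case.

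\textbf{The main obstacle} is this last step: making the passage from the non-compact neighbourhood $U$ to a situation covered by the compact $[Q,R]=0$ theorem fully rigorous — the normal-form identification, the fibrewise deformation computation on $Y$, and above all the singular-value case, which carries essentially all the subtlety of the classical Meinrenken--Sjamaar argument. The vanishing statement off the zero level is a secondary technical point if one wants a self-contained treatment rather than quoting \cite{Braverman02, Paradan01}.
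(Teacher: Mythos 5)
The paper does not prove this theorem from scratch: its proof is a citation, namely that Theorem C of \cite{Paradan01} is the compact quantisation-commutes-with-reduction statement and that Paradan's arguments in Section 7 of that paper go through for a noncompact $M$ with taming $\mu$ (see also Theorem 4.3 in \cite{TZ99}). Your proposal instead tries to reconstruct that proof. Its skeleton is the correct and standard one, and the first reduction is sound: $\mu^{-1}(0)\subseteq\{X^\mu=0\}$ is compact, so $\Ind(M_0)$ makes sense; $\{X^\mu=0\}$ is the critical set of $\|\mu\|^2$, on which $\|\mu\|^2$ takes finitely many values, so it splits $G$-invariantly into the level-$0$ part and a part on which $\|\mu\|\geq\delta$; and Braverman's excision (Theorem \ref{braverman index}(3)) plus additivity over disjoint unions reduces the theorem to the two local statements you isolate. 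In that sense you are travelling the same road as the references the paper quotes, just spelling out the localisation step yourself.

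The gaps are exactly at the two substantive points. (a) The vanishing of $\IndG(U',\mu|_{U'})_0$: your stated reason, that the prequantisation condition ``forces'' the kernel near a component $C_\beta$ with $\beta\neq 0$ to contain only $\pi_\lambda$ with $\lambda\neq 0$, is not an argument. The kernel of the deformed operator on such a neighbourhood is typically infinite-dimensional, and determining which $\pi_\lambda$ occur with nonzero net multiplicity is precisely the analytic/combinatorial content of Theorem 4.3 in \cite{TZ99} and Corollary 6.18 and Section 7 of \cite{Paradan01} (the weight shift by $\beta$ has to be extracted from a careful local model or deformation estimate, not from \eqref{line bundle} alone). (b) The identification $\IndG(U,\mu|_U)_0=\Ind(M_0)$: the normal form $Y=\mu^{-1}(0)\times\g^*$ you invoke is only valid when $G$ acts freely on a regular level $\mu^{-1}(0)$; in general one needs the orbifold or associated-bundle model, and the singular case ($0$ a non-regular value) is the Meinrenken--Sjamaar heart of the matter, which you explicitly defer to \cite{Meinrenken99, Paradan01}. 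So, read as a guided appeal to the same literature the paper cites, your argument is acceptable and essentially equivalent to the paper's one-line proof; read as a self-contained proof, it is missing precisely the two steps where all the difficulty of the theorem lies, and the one justification you do offer for step (a) would not survive scrutiny.
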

\begin{proof}
Theorem C in \cite{Paradan01} is the quantisation commutes with reduction result in the compact case. However, Paradan's arguments in Section 7 of that paper also imply this statement for noncompact manifolds. See also Theorem 4.3 in \cite{TZ99}.
\end{proof}

\begin{remark}
When the manifold $M$ is compact, the moment map $\mu$ is automatically taming and proper. Then  Theorem \ref{reduction at 0} is the Guillemin--Steinberg conjecture, %\cite{Guillemin82}, 
which was first proved by Meinrenken \cite{Meinrenken98} and Meinrenken--Sjamaar \cite{Meinrenken99}. Later, Tian--Zhang \cite{Zhang98} and Paradan \cite{Paradan01} gave different proofs. 
\end{remark}

Mich{\`e}le Vergne conjectured in her 2006 ICM plenary lecture \cite{MR2334206} that the identity (\ref{QR at 0}) holds not only for the trivial representation but for all irreducible $G$-representations.
\begin{theorem}
\label{main theorem}
Let  $(M, \omega)$ be a Hamiltonian $G$-space with pre-quantum line bundle $E$ and proper, taming moment map $\mu$. 
One has
\[
\IndG(M, \mu) = \sum_{\lambda \in \Lambda^*_+\cap \mu(M)} \Ind(M_{\lambda}) \pi_{\lambda}.
\]
\end{theorem}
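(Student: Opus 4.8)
The plan is to reduce the general statement to the case $\lambda = 0$, which is already known (Theorem \ref{reduction at 0}). The key idea is a \emph{shifting trick}: for a fixed $\lambda \in \Lambda^*_+$, the multiplicity $\IndG(M,\mu)_\lambda$ should be computable as an invariant-part multiplicity of the index of a deformed Dirac operator on a modified Hamiltonian $G$-space whose reduction at $0$ recovers $M_\lambda$. More precisely, let $\mathcal{O}_{-\lambda}$ be the coadjoint orbit through $-\lambda$, with its Kirillov--Kostant--Souriau symplectic form and the pre-quantum line bundle $\complex_{-\lambda}$ (using that $\lambda$ is a weight). Then $M \times \mathcal{O}_{-\lambda}$ is again a Hamiltonian $G$-space with moment map $\mu_\lambda = \mu + \mu_{\mathcal{O}_{-\lambda}}$, pre-quantum line bundle $E \boxtimes \complex_{-\lambda}$, and one has
\[
(M \times \mathcal{O}_{-\lambda})_0 = \mu_\lambda^{-1}(0)/G = \mu^{-1}(G \cdot \lambda)/G = M_\lambda,
\]
compatibly with the induced line bundles and almost complex structures, so $\Ind\big((M\times\mathcal{O}_{-\lambda})_0\big) = \Ind(M_\lambda)$.

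\textbf{Step 1: The multiplicativity statement.} The heart of the argument is to prove that
\[
\big(\IndG(M \times \mathcal{O}_{-\lambda}, \mu_\lambda)\big)_0 = \big(\IndG(M,\mu) \otimes \IndG(\mathcal{O}_{-\lambda})\big)_0 = \sum_{\nu} \IndG(M,\mu)_\nu \cdot (\pi_\nu \otimes \pi_\lambda^*)_0.
\]
Since $(\pi_\nu \otimes \pi_\lambda^*)^G$ is one-dimensional when $\nu = \lambda$ and zero otherwise, the right-hand side collapses to $\IndG(M,\mu)_\lambda$. As the paper's introduction signals, the point is to establish multiplicativity \emph{only for the $G$-invariant part} of the index, which is enough here and avoids having to work with noncompact vanishing sets. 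I would prove this by choosing the taming map on $M \times \mathcal{O}_{-\lambda}$ to be (a scaling of) $\mu_\lambda$, verifying that its vanishing set is still compact (using properness of $\mu$ and compactness of $\mathcal{O}_{-\lambda}$), and then using Braverman's cobordism invariance (Theorem \ref{braverman index}(4)) to deform the taming map on the product to $(\mu, t\mu_{\mathcal{O}_{-\lambda}})$-type families, together with the excision property (Theorem \ref{braverman index}(3)), to localise near $\{X^\mu = 0\} \times \mathcal{O}_{-\lambda}$ and factor the invariant part of the index as a product. The compactness of $\mathcal{O}_{-\lambda}$ makes the second factor an honest finite-dimensional virtual representation, so no completion subtleties arise in the factorisation.

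\textbf{Step 2: Apply the $\lambda = 0$ case.} Once multiplicativity of the invariant part is in hand, apply Theorem \ref{reduction at 0} to the Hamiltonian $G$-space $M \times \mathcal{O}_{-\lambda}$ with taming, proper moment map $\mu_\lambda$: if $0 \notin \mu_\lambda(M \times \mathcal{O}_{-\lambda})$, i.e.\ $\lambda \notin \mu(M)$, then $\IndG(M,\mu)_\lambda = \big(\IndG(M\times\mathcal{O}_{-\lambda},\mu_\lambda)\big)_0 = 0$; if $0 \in \mu_\lambda(M\times\mathcal{O}_{-\lambda})$, i.e.\ $\lambda \in \mu(M)$, then $\IndG(M,\mu)_\lambda = \big(\IndG(M\times\mathcal{O}_{-\lambda},\mu_\lambda)\big)_0 = \Ind\big((M\times\mathcal{O}_{-\lambda})_0\big) = \Ind(M_\lambda)$. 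Summing over $\lambda \in \Lambda^*_+$ gives the theorem; note the sum ranges only over $\lambda \in \mu(M)$ precisely because the other terms vanish. One should also check that $\mu_\lambda$ is proper whenever $\mu$ is (immediate, since $\mathcal{O}_{-\lambda}$ is compact and the $\mathcal{O}_{-\lambda}$-moment map is bounded), so that the reduced spaces are compact and $\Ind(M_\lambda)$ is well-defined, including via the regularisation by small $\epsilon$ when $\lambda$ is singular.

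\textbf{Main obstacle.} The crux is Step 1: proving multiplicativity of the invariant part of Braverman's index on the product $M \times \mathcal{O}_{-\lambda}$. The subtlety is that $M$ is noncompact, so one cannot simply invoke a product formula for indices of elliptic operators; one must carefully arrange the taming maps and admissible functions on the product so that Braverman's cobordism argument applies, and verify that the relevant deformation of taming maps keeps the vanishing set compact throughout (this is where properness of $\mu$ is essential). A secondary technical point is checking that the identification $(M \times \mathcal{O}_{-\lambda})_0 \cong M_\lambda$ respects all the auxiliary data (symplectic forms, pre-quantum line bundles, almost complex structures, and the $\epsilon$-regularisation at singular values) so that the orbifold $\Spin^c$-indices genuinely agree.
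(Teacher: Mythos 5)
Your outline is the same shifting trick the paper uses (pair $M$ with the coadjoint orbit carrying $\pi_\lambda^*$, prove multiplicativity of the \emph{invariant} part, then quote Theorem \ref{reduction at 0}), but Step 1 has a genuine gap at precisely the point the paper identifies as the crux. The claim that $\mu_\lambda=\mu+\mu_{\mathcal{O}_{-\lambda}}$, and the interpolating family $\mu+t\,\mu_{\mathcal{O}_{-\lambda}}$, are taming on $M\times\mathcal{O}_{-\lambda}$ does \emph{not} follow from properness of $\mu$ plus compactness of the orbit. The vanishing set of $X^{\mu_\lambda}$ is a union, over stabilisers $H$, of sets $G\cdot\bigl((M\times\mathcal{O}_{-\lambda})^H\cap\mu_\lambda^{-1}(\kh)\bigr)$; by the Kostant-type argument (Lemma \ref{property of moment map}) each connected component of such a set sits in a single level of $\mu_\lambda$ and is therefore compact by properness, but on a noncompact $M$ there may be infinitely many stabilisers and infinitely many fixed-point components, with $\|\mu\|\to\infty$ along them, so the union can be unbounded even though $\{X^\mu=0\}$ is compact. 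This is exactly why Ma--Zhang and Paradan are forced to allow noncompact vanishing sets at the multiplicativity step (see the introduction of the paper). The paper's workaround is the content of Proposition \ref{prop mu proper} and Lemma \ref{lem phi taming}: first truncate to a relatively compact invariant open set $U_M\supseteq\{X^\mu=0\}$, where only finitely many stabilisers and components occur, and then replace $\mu$ by a modified moment map $\mu_\chi$ that is \emph{proper on $U_M$} and agrees with $\mu$ near $\{X^\mu=0\}$; only then is the deformation $\mu_\chi+\eta(t)\mu^F$ taming and Theorem \ref{braverman index}(3)--(4) applicable. Without some such device, both your cobordism step and your application of Theorem \ref{reduction at 0} to $(M\times\mathcal{O}_{-\lambda},\mu_\lambda)$ are unjustified.

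Moreover, once you repair this by truncating as above, a second ingredient becomes unavoidable which your sketch does not supply: the deformation/factorisation identity lives on $U_M\times N$, while Theorem \ref{reduction at 0} is applied on a neighbourhood $U_{M\times N}$ of $(\mu^E+\mu^F)^{-1}(0)$, and $U_{M\times N}$ does \emph{not} contain the full vanishing set of the deformed vector field (the components where $\mu^E+\mu^F=\beta\neq 0$ lie outside it), so Braverman's excision property cannot compare the two indices. The paper bridges this in Lemma \ref{lem U M N} using an external localisation result for the invariant part --- Paradan's Corollary 6.18 (vanishing of the invariant part when the norm of the moment map has a large lower bound) --- together with the equivalence of index definitions in Remark \ref{rem other defs}. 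So the two essential inputs beyond Braverman's axioms, namely the ``make the taming map proper on a relatively compact set'' construction and the invariant-part vanishing theorem, are exactly what is missing from your proposal; excision and cobordism invariance alone do not close either gap.
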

One can view $\IndG(M, \mu)$ as the geometric quantisation of the action by $G$ on $(M, \omega)$. Vergne's conjecture then states that quantisation commutes with reduction in this context.

In the remainder of this paper, we give a proof of Theorem \ref{main theorem}.

%In this paper, we will prove Theorem \ref{main theorem} using Theorem \ref{reduction at 0}. It should be clear that the idea overlaps that in \cite[Section 5]{Song-spinc}, in which we prove a multiplicative property of equivariant indices on non-compact Spin$^c$-manifolds.  

%%% Proper taming maps %%%

\section{Making a taming map proper}

For any $\xi \in \g^* \cong \g$, let $\xi^M$ be the vector field induced by the infinitesimal action of $G$ on $M$. Let$\mu_{\xi} \in C^{\infty}(M)$ be the pairing of $\mu$ with $\xi$.
One has the Kostant formula
\begin{equation}
\label{kostant}
2 \pi \ii \mu_\xi = \nabla^E_{\xi^M} - \mathcal{L}^E_\xi, 
\end{equation}
for $\xi \in \g$, where $\mathcal{L}^E_\xi$ is the Lie derivative of sections of $E$.
If we choose a different $G$-invariant connection $\tilde \nabla^E$ on the pre-quantum line bundle $E$, we obtain a different  map $\tilde \mu : M \to \g^*$. We will still call such a map a moment map.

\begin{lemma}\label{property of moment map}
Let  $\tilde \mu$ be an arbitrary moment map defined as in (\ref{kostant}). Let $H \subset G$ be a closed subgroup with $\kh$ its Lie algebra. If $Z$ is a connected component of $M^H \cap \tilde \mu^{-1}(\kh)$, then $\tilde \mu$ is constant over $Z$. In particular, $ \tilde \mu(Z) \in \kh^*$ is given by the weight of the action of $H$ on the line bundle $E$ over $Z$. 
\begin{proof}
For any $\xi \in \kh$ and $m \in M^H$, we have that $\xi^M(m) = 0$. Thus, by (\ref{kostant}), 
\[
\mu_{\xi}(m) = \frac{\ii}{2\pi} \cdot \mathcal{L}^{E_m}_\xi,
\] 
which is determined by the weight of the action of $H$ on $E_m$ and is locally constant. 
\end{proof}
\end{lemma}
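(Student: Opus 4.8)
The plan is to evaluate the Kostant formula \eqref{kostant} (for the connection $\tilde\nabla^E$ defining $\tilde\mu$) at points of $M^H$, and to read off $\tilde\mu$ there from the action of $H$ on the fibres of $E$. First I would fix $m \in M^H$ and $\xi \in \kh$: since $\exp(t\xi) \in H$ fixes $m$, the induced vector field $\xi^M$ vanishes at $m$, so the term $\tilde\nabla^E_{\xi^M}$ in \eqref{kostant} vanishes at $m$ (it depends only on the value of $\xi^M$ there). Hence \eqref{kostant} degenerates at $m$ to
\[
2\pi\ii\, \tilde\mu_\xi(m) = -\mathcal{L}^{E_m}_\xi,
\]
the right-hand side being the infinitesimal generator of the linear $\exp(t\xi)$-action on the line $E_m$. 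Equivalently, the component of $\tilde\mu(m)$ along $\kh$ (under $\g \cong \g^*$) is $\tfrac{\ii}{2\pi}$ times the differential of the character $\chi_m\colon H \to U(1)$ by which $H$ acts on $E_m$.

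Next I would show that $m \mapsto \chi_m$ is locally constant on $M^H$. On each connected component of $M^H$ the line bundle $E$ restricts to an $H$-equivariant line bundle with $H$ acting trivially on the base, so every fibre is a one-dimensional representation of the compact group $H$, i.e.\ carries a character of $H$; continuity of $m \mapsto \chi_m$ together with discreteness of the character group $\widehat H$ forces this assignment to be constant on connected components. Combined with the displayed identity, this shows that for every $\xi \in \kh$ the function $\tilde\mu_\xi$ is constant on the connected set $Z$; that is, the $\kh$-component of $\tilde\mu$ is constant on $Z$, equal to $\tfrac{\ii}{2\pi}$ times the differential of the common character $\chi$ of $H$ on $E|_Z$.

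To conclude, I would invoke the hypothesis $Z \subseteq \tilde\mu^{-1}(\kh)$: on $Z$ the value of $\tilde\mu$ lies entirely in $\kh$, so it coincides with its $\kh$-component, which has just been shown to be constant. This gives that $\tilde\mu$ is constant on $Z$, and identifies $\tilde\mu(Z) \in \kh^*$ with the weight of the $H$-action on $E|_Z$. I do not anticipate a genuine obstacle here: the content is the Kostant formula together with the standard fact that the fibrewise weight of an equivariant line bundle over a fixed-point set is locally constant. The one step that needs a moment's care is precisely this last one — without the assumption $Z \subseteq \tilde\mu^{-1}(\kh)$ one would only control the $\kh$-component of $\tilde\mu$ on $Z$, not $\tilde\mu$ itself.
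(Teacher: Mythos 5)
Your proposal is correct and follows essentially the same route as the paper: evaluate the Kostant formula at points of $M^H$, where $\xi^M$ vanishes for $\xi \in \kh$, so that $\tilde\mu_\xi$ is given by the fibrewise weight of the $H$-action on $E$, which is locally constant. You additionally spell out two points the paper leaves implicit — the discreteness argument for local constancy of the character, and the use of the hypothesis $Z \subseteq \tilde\mu^{-1}(\kh)$ to pass from constancy of the $\kh$-component to constancy of $\tilde\mu$ itself — which is a sensible elaboration rather than a different proof.
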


Let $U \subset M$ be a $G$-invariant open, relatively compact subset of $M$ such that $\mu$ is taming over $U$. Let $(X^\mu)^*$ be the dual of the vector field $X^\mu$, which is a $G$-invariant 1-form on $U$. For any $G$-invariant %non-negative 
function $\chi$ on $U$, setting
\[
\nabla^E_{\chi} = \nabla^E + 2\pi \ii \chi \cdot (X^\mu)^*
\]
defines a new connection $\nabla^E_{\chi}$ on $E$. Let $\mu_{\chi} : U \to \g^*$ be the moment map determined by $\nabla^E_{\chi}$ and (\ref{kostant}). The following proposition plays a key role.

\begin{proposition} \label{prop mu proper}
Let $V$ be a $G$-invariant, relatively compact neighbourhood of $\{X^\mu = 0 \} \cap U$ such that $\overline{V} \subset U$. We can choose the function $\chi$ so that
 \begin{enumerate}
 \item $\mu_{\chi}$ is proper;
 \item $\mu_{\chi}|_V = \mu|_V$;
 \item $\|\mu_{\chi}\| \geq \|\mu|_U\|$;
 \item the vector fields $X^{\mu}|_U$ and $X^{\mu_{\chi}}$ have the same set of zeroes.
 \end{enumerate}
\begin{proof}
Let $\{\xi_1, \ldots, \xi_{\dim {\g}}\}$ be an orthonormal  basis of $\g$. We define a map $\psi : U \to \g$ by  
\[
\psi (m):=  \sum_{j=1}^{\dim {\g}} \langle X^{\mu}(m), \xi_j^M(m)\rangle \cdot  \xi_j \in \g,
\]
for $m\in M$. Then we have
\[
\langle \psi, \mu\rangle =  \|X^\mu\|^2 \quad \text{and} \quad \mu_{\chi} =  \mu + \chi \cdot \psi.
\] 
The two maps $\mu, \psi : U \to \g$ are bounded since $U$ is relatively compact. Moreover, the assumption that $X^\mu \neq 0$ outside $V$ ensures that there exists $\epsilon >0$ so that $\|\psi \| > \epsilon$ over $U \setminus V$. Thus, $\mu_{\chi}$ is proper as long as the function $\chi$ is a proper function over $U$.

If we choose the function $\chi$ so that $\chi \equiv 0$ on $V$, the second condition is satisfied. The third condition follows directly from the following inequality
\[
\|\mu_{\chi}\|\cdot \|\mu\| \geq \langle \mu_{\chi}, \mu\rangle = \|\mu\|^2 +  \chi \cdot \|X^\mu\|^2 \geq \|\mu\|^2.  
\]

It remains to compare the vanishing set of the vector fields $X^\mu$ and $X^{\mu_{\chi}}$. First, suppose $X^{\mu}(m) = 0$. Then 
$\mu_{\chi}(m) = \mu(m)$, so
%\[
$
X^{\mu_{\chi}}(m)  = X^{\mu}(m)  = 0.
$
%\]
To prove the converse implication, note that 
\begin{equation} 
\langle X^{\mu}, X^{\mu_{\chi}} \rangle= \|X^\mu\|^2 + \chi \cdot \sum_{j=1}^{\dim {\g}} \langle X^{\mu}, {\xi_j}^M \rangle^2.
\end{equation}
The second term on the right-hand side of the above equation is non-negative provided the function $\chi$ is non-negative. Then $X^{\mu_{\chi}}(m) = 0$ implies that $X^\mu(m) = 0$. This completes the proof. 
\end{proof}

\end{proposition}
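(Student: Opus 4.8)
The plan is to construct $\chi$ explicitly as a function of $\|\mu\|$ (or more precisely of a proper exhaustion function on $U$) that is identically zero on a neighbourhood of $\overline V$ and grows rapidly near the boundary $\partial U$, and then verify the four conditions in turn. The underlying identities $\langle \psi, \mu \rangle = \|X^\mu\|^2$ and $\mu_\chi = \mu + \chi\cdot\psi$ are the only nontrivial inputs; once these are in hand, conditions (2)--(4) are essentially formal, so the real work is arranging (1) — properness — simultaneously with the others.

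First I would record the two displayed identities. For $\mu_\chi = \mu + \chi\cdot\psi$: by definition $\nabla^E_\chi = \nabla^E + 2\pi\ii\,\chi\cdot(X^\mu)^*$, so Kostant's formula \eqref{kostant} applied to $\nabla^E_\chi$ gives, for $\xi\in\g$,
\[
2\pi\ii\,(\mu_\chi)_\xi = (\nabla^E_\chi)_{\xi^M} - \Lie^E_\xi = 2\pi\ii\,\mu_\xi + 2\pi\ii\,\chi\cdot(X^\mu)^*(\xi^M),
\]
and $(X^\mu)^*(\xi^M) = \langle X^\mu, \xi^M\rangle$, which is exactly $\langle\psi,\xi\rangle$ by definition of $\psi$; hence $(\mu_\chi)_\xi = \mu_\xi + \chi\langle\psi,\xi\rangle$ for all $\xi$, i.e.\ $\mu_\chi = \mu + \chi\cdot\psi$. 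For $\langle\psi,\mu\rangle = \|X^\mu\|^2$: expanding $X^\mu(m)$ in the moving frame $\{\xi_j^M(m)\}$ (using $\mu(m) = \sum_j \langle\mu(m),\xi_j\rangle\xi_j$ and $X^{\mu(m)} = \sum_j\langle\mu(m),\xi_j\rangle\,\xi_j^M$), one gets $\langle\psi,\mu\rangle = \sum_j\langle X^\mu,\xi_j^M\rangle\langle\mu,\xi_j\rangle = \langle X^\mu, X^\mu\rangle$.

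Next, for the choice of $\chi$: since $U$ is relatively compact and $\mu,\psi$ are restrictions of smooth maps defined on a neighbourhood of $\overline U$, both are bounded on $U$, say $\|\mu\|,\|\psi\| \le C$. Since $\{X^\mu = 0\}\cap U \subseteq V$ and $\overline V\subset U$, on the closed set $\overline U\setminus V$ the continuous function $\|X^\mu\|^2 = \langle\psi,\mu\rangle \le \|\psi\|\,\|\mu\|$ is bounded below by a positive constant, forcing $\|\psi\|\ge\epsilon > 0$ on $U\setminus V$. Now pick any smooth proper nonnegative function $h$ on $U$ (an exhaustion, with $h\to\infty$ at $\partial U$) and a smooth $G$-invariant cutoff that vanishes on a neighbourhood of $\overline V$ and equals $1$ outside a slightly larger set; set $\chi = (\text{cutoff})\cdot h$, possibly rescaled so $\chi$ itself is proper on $U$. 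Then $\chi\ge 0$ and $\chi\equiv 0$ on a neighbourhood of $\overline V$, giving (2) immediately, and (4): if $X^\mu(m)=0$ then $m\in V$ so $\chi(m)=0$ and $\mu_\chi(m)=\mu(m)$, hence $X^{\mu_\chi}(m)=0$; conversely from $\langle X^\mu, X^{\mu_\chi}\rangle = \|X^\mu\|^2 + \chi\sum_j\langle X^\mu,\xi_j^M\rangle^2 \ge \|X^\mu\|^2 \ge 0$ (using $\chi\ge 0$), $X^{\mu_\chi}(m)=0$ forces $\langle X^\mu(m), X^{\mu_\chi}(m)\rangle=0$ hence $X^\mu(m)=0$. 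Condition (3) is the Cauchy--Schwarz chain $\|\mu_\chi\|\,\|\mu\|\ge\langle\mu_\chi,\mu\rangle = \|\mu\|^2 + \chi\|X^\mu\|^2 \ge \|\mu\|^2$ already displayed.

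The main obstacle is properness, condition (1): I would argue that $\|\mu_\chi\| \ge \chi\|\psi\| - \|\mu\| \ge \epsilon\,\chi - C$ on $U\setminus V$, while $\mu_\chi$ is bounded on the relatively compact $\overline V$; since $\chi$ is proper on $U$, the sublevel sets $\{\|\mu_\chi\|\le R\}$ are contained in $V \cup \{\epsilon\chi \le R+C\}$, which is relatively compact in $U$ — hence $\mu_\chi : U\to\g^*$ is proper. The one subtlety to handle with care is that ``proper'' must be interpreted as a map $U\to\g^*$ with $U$ (not $M$) as domain, i.e.\ preimages of compact sets are compact in $U$; this is precisely what the exhaustion function $h$ delivers, and it is the reason one rescales $\chi$ to dominate $h$. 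Everything else is bookkeeping with $G$-invariance of the cutoff and $h$, which can be arranged by averaging over $G$.
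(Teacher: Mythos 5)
Your proposal is correct and follows essentially the same route as the paper: the same auxiliary map $\psi$, the identities $\langle\psi,\mu\rangle=\|X^\mu\|^2$ and $\mu_\chi=\mu+\chi\psi$, a nonnegative, $G$-invariant, proper $\chi$ vanishing on $V$, Cauchy--Schwarz for (3), and the inner-product computation $\langle X^\mu,X^{\mu_\chi}\rangle\ge\|X^\mu\|^2$ for (4). The only difference is that you spell out details the paper leaves implicit (the Kostant-formula verification, the explicit construction of $\chi$, and the estimate $\|\mu_\chi\|\ge\epsilon\chi-C$ giving properness), which is fine.
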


%%% Localisation %%%

\section{A localisation on product manifolds}

Suppose that $N$ is a compact Hamiltonian $G$-space with pre-quantum line bundle $F$, and moment map $\mu^F : N \to \g^*$. From now on, we will denote the moment map $\mu$ by $\mu^E$, to make the disctinction with $\mu^F$ clear.
For any map $\psi: M \to \g^*$, we abuse notation by  also denoting  the map 
$
M\times N \to \g^*,
$
mapping $(m, n) \in M\times N$ to $\psi(m)$, by $\psi$. (And similarly for maps from $N$ to $\g^*$.)

Let $U \subseteq M$ be a $G$-invariant open, relatively compact subset such that $\mu^E$ is taming over $U$. Fix a subset $V \subset U$ and a function $\chi$ as in Proposition \ref{prop mu proper}.  Let $\eta \in C^{\infty}(\R)$ be a function with values in $[0,1]$, and such that
\[
\eta(t) = \left\{\begin{array}{ll} 
0 & \text{if $t \leq 1/3$;}\\
1 & \text{if $t \geq 2/3$.}
\end{array} \right.
\]
Set $W:= U \times N \times [0,1]$, and consider the map
$
\phi: W \to \g^*
$ given by
\[
\phi(m, n, t) = \mu^E_{\chi}(m) + \eta(t)\mu^F(n),
\]
for $(m, n, t) \in W$. Since $\mu^E_\chi$ is proper and $\mu^F$ is bounded, the map $\phi$ has to be proper. 
\begin{lemma} \label{lem phi taming}
The map $\phi$ is taming.
\end{lemma}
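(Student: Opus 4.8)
The plan is to identify the zero set of $X^\phi$ on $W = U \times N \times [0,1]$ with a manifestly compact set and argue it is closed in $M \times N \times [0,1]$, hence compact there as well; this is what "taming" requires. First I would observe that, because the $G$-action on $W$ is the product of the actions on $U$ and $N$ with trivial action on $[0,1]$, the induced vector field $X^\phi$ at a point $(m,n,t)$ decomposes according to the tangent splitting $T_{(m,n,t)}W = T_mU \oplus T_nN \oplus T_t[0,1]$. The $[0,1]$-component is zero, so $X^\phi(m,n,t) = 0$ is equivalent to $\phi(m,n,t)^M$ vanishing along $M$ and along $N$ simultaneously; that is, $(X^{\mu^E_\chi + \eta(t)\mu^F})^M = 0$ on the $U$-factor and $(X^{\mu^E_\chi + \eta(t)\mu^F})^N = 0$ on the $N$-factor, where in each case the superscript indicates the vector field induced on that factor by the indicated $\g$-valued function evaluated at the relevant point.

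Next I would compute $\|X^\phi\|^2$ by pairing $X^\phi$ with the fundamental vector field generated by $\phi$, using the Kostant-type identity $\langle (\xi)^M, X^\psi\rangle$-bookkeeping already exploited in the proof of Proposition \ref{prop mu proper}. Concretely, pairing the $M$-component gives $\langle X^{\mu^E_\chi}, X^\phi\rangle_{TM} = \langle \mu^E_\chi + \eta(t)\mu^F, \psi_M\rangle$ and pairing the $N$-component gives the analogous expression with $\psi_N$, where $\psi_M, \psi_N$ are the moment-square maps on each factor. Summing and using that both contributions are of the form $\|X^{\bullet}\|^2 \ge 0$, the vanishing of $X^\phi$ forces $X^{\mu^E_\chi}(m) = 0$ on $U$ and $X^{\mu^F}(n) = 0$ on $N$ independently of $t$ — the cross terms $\eta(t)\langle X^{\mu^F}, (\mu^E_\chi)^M\rangle$ on $M$ vanish since $(\mu^F)^M = 0$ (that component of the action is on $N$, not $M$), and symmetrically on $N$. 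So $\{X^\phi = 0\} = \{X^{\mu^E_\chi} = 0\} \times \{X^{\mu^F} = 0\} \times [0,1]$.

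Now I invoke the earlier results: by Proposition \ref{prop mu proper}(4), $\{X^{\mu^E_\chi} = 0\}$ equals $\{X^{\mu^E} = 0\} \cap U$, which is compact and contained in $V$ with $\overline V \subset U$; and $\{X^{\mu^F} = 0\}$ is closed in the compact manifold $N$, hence compact. Therefore $\{X^\phi = 0\}$ is a product of compact sets with $[0,1]$, so it is compact, and moreover it is contained in $V \times N \times [0,1]$ which is relatively compact with closure inside $W$; in particular the set is closed in $M \times N \times [0,1]$ and $\phi$ is taming. The one point requiring a little care — and the main obstacle — is justifying the orthogonal decomposition of $X^\phi$ and the vanishing of the cross terms rigorously: one must be precise that $\mu^E_\chi$ depends only on $m$ while $\mu^F$ depends only on $n$, so that the $\g$-element $\phi(m,n,t)$ acts on the $M$-factor through $\mu^E_\chi(m)$-dynamics plus a $\eta(t)\mu^F(n)$-piece whose fundamental vector field on $M$ is nonzero in general (it is $(\eta(t)\mu^F(n))^M$), and then show this extra piece does not spoil the sign argument. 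This is handled exactly as in Proposition \ref{prop mu proper}: pairing with $X^{\mu^E_\chi}$ and $X^{\mu^F}$ separately and adding yields a sum of squares plus a term with a definite sign once $\chi \ge 0$ and $\eta \ge 0$, forcing each vector field to vanish.
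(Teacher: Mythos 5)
The central step of your argument --- the identity $\{X^\phi = 0\} = \{X^{\mu^E_\chi}=0\}\times\{X^{\mu^F}=0\}\times[0,1]$ --- is false, and the sign argument you sketch to justify it does not exist. At a point $(m,n,t)$ the vector field $X^\phi$ is generated by the \emph{single} Lie algebra element $\xi := \mu^E_\chi(m)+\eta(t)\mu^F(n)$, so $X^\phi(m,n,t)=0$ means $\xi^M(m)=0$ and $\xi^N(n)=0$; this neither implies nor is implied by the separate conditions $X^{\mu^E_\chi}(m)=0$ and $X^{\mu^F}(n)=0$. Already for $G=S^1$, writing $v_M, v_N$ for the generating vector fields on the two factors: if $\mu^E_\chi(m)=0$, $v_N(n)=0$, but $v_M(m)\neq 0$ and $\mu^F(n)\neq 0$, then $X^{\mu^E_\chi}(m)=0=X^{\mu^F}(n)$, yet $X^\phi(m,n,t)=\eta(t)\,\mu^F(n)\,v_M(m)\neq 0$ for $t\geq 2/3$; conversely, any point with $\mu^E_\chi(m)+\eta(t)\mu^F(n)=0$ lies in $\{X^\phi=0\}$ with no relation to the individual zero sets. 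The cross term you would need to control, $\eta(t)\langle X^{\mu^E_\chi}(m),(\mu^F(n))^M(m)\rangle$, has no definite sign: in Proposition \ref{prop mu proper} positivity came from the special choice $\psi=\sum_j\langle X^\mu,\xi_j^M\rangle\,\xi_j$, which makes the perturbation point ``in the same direction'' as $X^\mu$, whereas here $\mu^F(n)$ is an arbitrary element of $\g$ unrelated to $X^{\mu^E_\chi}(m)$. So the set $\{X^\phi=0\}$ is genuinely more complicated than a product, and your compactness conclusion does not follow.

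A further warning sign is that your argument never uses the properness of $\phi$, i.e.\ the whole point of replacing $\mu^E$ by $\mu^E_\chi$, which is essential in the paper's proof. There one decomposes $\{X^\phi=0\}=\bigcup_H G\cdot\bigl(W^H\cap\phi^{-1}(\kh)\bigr)$ over the stabilisers $H$ of the $T$-action (only finitely many occur because $U$ is relatively compact), uses Lemma \ref{property of moment map} to show that $\phi$ maps each connected component $Z$ of $W^H\cap\phi^{-1}(\kh)$ into the compact segment $\alpha_Z+[0,1]\beta_Z$ determined by the weights of $H$ on $E$ and $F$ over $Z$, observes that only finitely many such weights occur, and then deduces compactness of $W^H\cap\phi^{-1}(\kh)$ from compactness of its image because $\phi$ is proper. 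Some localisation-plus-properness argument of this kind is needed; the product decomposition cannot be repaired by a sign estimate.
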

\begin{proof}
The vanishing set of $X^{\phi}$ decomposes as
\[
\{X^{\phi}=0\} = \bigcup_H G \cdot \big(W^H \cap \phi^{-1}(\kh) \big),
\]
where $H$ runs over the stabiliser groups of the action by $T$ on $W$. Since $U$ is relatively compact in $M$, only finitely many such stabilisers occur. Hence it is enough to prove that for each stabiliser $H$, the set
\beq \label{eq WH phi H}
W^H \cap \phi^{-1}(\kh) 
\eeq
is compact.

Fix a stabiliser group $H$ of the $T$-action on $W$ and  a connected component $Z$ of $W^H \cap \phi^{-1}(\kh)$. Suppose that $\alpha_Z, \beta_Z \in \kh^*$ are the weights of the action of $H$ on the line bundles $E$ and $F$ restricted to $Z$. Then we have that
\[
\phi(Z) \subset \alpha_Z + [0,1]\beta_Z \subset \kh^*,
\]
which is a compact segment. Since the closure $\overline{W^H}$ of $W^H$ in $M\times N\times [0,1]$ is compact, it has finitely many connected components. The weight of the action by $H$ on a line bundle over $\overline{W^H}$ is constant on these connected components. So there are only finitely many elements $\alpha_Z, \beta_Z \in \kh^*$ as above (where $H$ is fixed but $Z$ may vary). Hence the set
\beq  \label{eq phi WH}
\phi\bigl(W^H \cap \phi^{-1}(\kh) \bigr)
\eeq
is compact.

The point of using the proper moment map 
 $\mu^E_{\chi}$ rather than the original map $\mu^E$ is that this makes  the map $\phi$  proper. Therefore, compactness of the set \eqref{eq phi WH} implies compactness of the set \eqref{eq WH phi H}.
\end{proof}

A particular consequence of Lemma \ref{lem phi taming} is that the map 
\[
\mu^E_{\chi} + \mu^F = \phi(\relbar, \relbar, 1)
\]
is taming. So the index
\[
\IndG(U_M \times N, \mu^E_{\chi} + \mu^F)
\]
is well-defined. By Lemma \ref{lem phi taming} and the fourth point of Theorem \ref{braverman index}, it equals 
\[
\IndG(U_M \times N, \mu^E_{\chi}).
\]
By Proposition \ref{prop mu proper}, the vector fields induced by $\mu^E_{\chi}$ and $\mu^E$ have the same set of zeroes, and are equal in a neighbourhood of that set. So by the third point of Theorem \ref{braverman index}, we find that
\beq \label{eq key-1}
\IndG(U_M \times N, \mu^E_{\chi} + \mu^F) = \IndG(U_M \times N, \mu^E) \quad \in \widehat{R}(G).
\eeq

%%% Proof %%%

\section{Proof of the Vergne conjecture} \label{sec proof}

Let us fix a $\lambda \in \Lambda^*_+$, and let $N := G\cdot \lambda$ be the orbit through $\lambda$ of the coadjoint action by $G$ on $\g^*$. Let $F$ be the dual of the canonical pre-quantum holomorphic line bundle on $N$, so that the associated moment map $\mu^F$ is minus the inclusion $N \hookrightarrow \g^*$. By the Borel--Weil--Bott theorem, we know that
\[
\IndG(N, F) = \pi_\lambda^*.
\]
%where $W_\lambda$ the irreducible $G$-representation with highest weight $\lambda$ and $W_\lambda^*$ its dual. 

Let $R > \|\lambda\|^2$ be a regular value of the function $\|\mu^E + \mu^F\|^2 : M \times N \to \R$. Define 
\[
U_{M\times N} = \{(m, n) \in M \times N\big| \|\mu^E(m) + \mu^F(n)\|^2 < R\} \subseteq M \times N.
\]
Then $U_{M\times N}$ is a $G$-invariant, open, relatively compact subset of $M \times N$. For a generic choice of $R$, the map $\mu^E + \mu^F$ is taming over $U_{M \times N}$, as we will assume. By the choice of $N$, 
\[
(\mu^E + \mu^F)^{-1}(0) \cong (\mu^E)^{-1}( G \cdot \lambda). 
\]
By Theorem \ref{reduction at 0}, we therefore have
\begin{equation}
\label{eq-1}
\IndG(U_{M\times N}, \mu^E + \mu^F)_0 = \Ind(M_\lambda) \quad \in \Z,
\end{equation}
if $\lambda \in \mu^E(M)$, and zero otherwise.

%Let $U_M \subset M$ be a $G$-invariant, open, relatively compact subset such that
%\[
%\{X^{\mu^E} = 0\} \subset U_M \subset M. 
%\]

Choose $R' > 0$ large enough so that the set
\[
U_M := \{m\in M; \|\mu^E(m)\|^2 < R'\}
\]
contains $\{X^{\mu^E} = 0\}$. Again, we can choose $R'$ such that $\mu^E$ is taming on $U_M$.
In addition, choose $R' > R$ so that there is a $G$-invariant neighbourhood $V_M$ of $\{X^{\mu^E}=0\}$ such that $\overline{V_M} \subset U_M$, and
\[
\overline{U_{M\times N}} \subset V_M \times N.
\]
This is possible because $\mu^F$ is bounded on $N$. Let the function $\chi \in C^{\infty}(U_M)^G$ be as in Proposition \ref{prop mu proper}, applied with $U = U_M$ and $V = V_M$.
By \eqref{eq key-1}, we have that
\[
\IndG(U_M \times N, \mu^E_{\chi} + \mu^F) = \IndG(U_M \times N, \mu^E) \quad  \in \widehat{R}(G). 
\]
In particular, 
\begin{equation}
\label{eq-2}
\IndG(U_M \times N, \mu^E_{\chi} + \mu^F)_0 =  \bigl(\IndG(U_M, \mu^E) \otimes \pi_\lambda^*\bigr)_0  =  \IndG(M, \mu^E)_\lambda \quad \in \Z.
\end{equation}

Because of \eqref{eq-1} and \eqref{eq-2}, the last step in the proof of Theorem \ref{main theorem} is the following equality.
\begin{lemma} \label{lem U M N}
For $R$ and $R'$ large enough, one has
\[
\IndG(U_M \times N, \mu^E_{\chi} + \mu^F)_0 = \IndG(U_{M\times N}, \mu^E + \mu^F)_0 \quad \in \Z. 
\]
\end{lemma}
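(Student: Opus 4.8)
The plan is to compare the two indices by interpolating between the maps $\mu^E_\chi + \mu^F$ on $U_M \times N$ and $\mu^E + \mu^F$ on $U_{M\times N}$ using the third and fourth points of Theorem \ref{braverman index} (the excision property for open neighbourhoods of the vanishing set, and the homotopy invariance for taming families). The key geometric input is already arranged: by the choice of $R'$ we have $\overline{U_{M\times N}} \subset V_M \times N \subset U_M \times N$, and by Proposition \ref{prop mu proper}(2) the map $\mu^E_\chi$ agrees with $\mu^E$ on $V_M$; moreover, since $R > \|\lambda\|^2$ and $R' > R$, the zero locus of $X^{\mu^E + \mu^F}$ on $M \times N$ sits inside $U_{M\times N}$ (this is the localisation in the statement of Lemma \ref{lem U M N}, and must be checked explicitly using the fact that $\mu^F$ is bounded by $\|\lambda\|$ on $N = G\cdot\lambda$).

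\emph{First}, I would show that both indices reduce to the index over the common open set $U_{M\times N}$. On the one hand, $\{X^{\mu^E+\mu^F}=0\} \subseteq U_{M\times N}$, so by Theorem \ref{braverman index}(3),
\[
\IndG(U_M \times N, \mu^E + \mu^F) = \IndG(U_{M\times N}, \mu^E + \mu^F).
\]
On the other hand, on $U_{M\times N}$ we have $U_{M\times N} \subseteq V_M \times N$, where $\mu^E_\chi = \mu^E$; hence the vector fields $X^{\mu^E_\chi + \mu^F}$ and $X^{\mu^E + \mu^F}$ literally coincide on $U_{M\times N}$, and in particular have the same (compact) zero set there, which by the previous point is all of $\{X^{\mu^E+\mu^F}=0\}$ anyway. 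So $\{X^{\mu^E_\chi+\mu^F}=0\}\cap (U_M\times N)$ is contained in $U_{M\times N}$, and Theorem \ref{braverman index}(3) gives
\[
\IndG(U_M \times N, \mu^E_\chi + \mu^F) = \IndG(U_{M\times N}, \mu^E_\chi + \mu^F) = \IndG(U_{M\times N}, \mu^E + \mu^F),
\]
the last equality because the two maps agree on $U_{M\times N}$. Taking the multiplicity of the trivial representation on both sides yields the claimed equality.

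\emph{The main obstacle} is the verification that $\{X^{\mu^E + \mu^F} = 0\} \subseteq U_{M\times N}$, i.e.\ that no zero of the deformed vector field escapes the sublevel set $\{\|\mu^E+\mu^F\|^2 < R\}$. A zero $(m,n)$ of $X^{\mu^E+\mu^F}$ is a zero of the vector field generated by $\mu^E(m)+\mu^F(n)$, which lies in the fixed-point set of the torus it generates; on such a component, by Lemma \ref{property of moment map} applied on $M\times N$, the value $\mu^E(m)+\mu^F(n)$ equals a fixed weight of the relevant stabiliser. One must then argue — as in the proof of Lemma \ref{lem phi taming}, using boundedness of $\mu^F$ on $N=G\cdot\lambda$ (so $\|\mu^F\| \le \|\lambda\|$) together with properness of $\mu^E$ and the fact that $X^{\mu^E}=0$ forces $\|\mu^E\|$ into a bounded range — that all such weights have norm-square less than the regular value $R$. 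A secondary point requiring care is that $U_{M\times N}$ is only relatively compact, not complete; as explained in Section \ref{sec Braverman}, one tacitly rescales the metric near the (compact) zero locus, and all the indices above are taken with respect to such a rescaled metric, which is legitimate since the rescaling can be chosen identically on the overlapping regions. Once these two points are in place, the argument is a routine application of Theorem \ref{braverman index}(3), with no homotopy actually needed.
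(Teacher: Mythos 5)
Your argument hinges on the claim that the zero locus of the \emph{deformed vector field} $X^{\mu^E+\mu^F}$ (and likewise of $X^{\mu^E_\chi+\mu^F}$ on $U_M\times N$) is contained in $U_{M\times N}$, so that the third point of Theorem \ref{braverman index} alone gives equality of the \emph{full} indices in $\widehat{R}(G)$. This is a genuine gap: you are conflating the zero level set of the map, $(\mu^E+\mu^F)^{-1}(0)$, which the choice $R>\|\lambda\|^2$ does control, with the vanishing set of the induced vector field, which it does not. The set $\{X^{\phi}=0\}$ is of the form $\bigcup_H G\cdot\bigl(W^H\cap\phi^{-1}(\kh)\bigr)$ (as in the proof of Lemma \ref{lem phi taming}) and in general contains ``higher'' components on which $\phi$ equals nonzero weights; for points of $U_M\times N$ with large stabilisers these components typically lie outside $\{\|\mu^E+\mu^F\|^2<R\}$. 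Your suggested fix --- choose $R$ larger than all the weights that occur --- is circular: the weights that can occur are those attached to components meeting $U_M\times N$, and $U_M$ is the sublevel set at $R'>R$, so enlarging $R$ enlarges $U_M$ and can bring in new, larger weights. Nothing in the hypotheses rules this out, and indeed if your excision argument were correct it would prove $\IndG(U_M\times N,\mu^E_\chi+\mu^F)=\IndG(U_{M\times N},\mu^E+\mu^F)$ as elements of $\widehat{R}(G)$, hence (combined with \eqref{eq-1} and \eqref{eq-2}) the full multiplicity statement without any vanishing theorem --- contradicting the paper's closing Remark, which explains that multiplicities of nontrivial representations require the stronger results of \cite{Zhang14, Paradan11}.

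What is actually needed, and what the paper uses, is a vanishing theorem rather than excision: Corollary 6.18 of \cite{Paradan01} (see also Theorem 9.6 of \cite{Paradan15}) says that the \emph{invariant part} of the index vanishes on pieces where the norm of the moment map has a sufficiently large lower bound. Applying this to the components of $\{X^{\mu^E_\chi+\mu^F}=0\}$ lying outside $U_{M\times N}$ (where $\|\mu^E_\chi+\mu^F\|$ is large, since $U_{M\times N}$ is a neighbourhood of $(\mu^E_\chi+\mu^F)^{-1}(0)$ and $\|\mu^E_\chi\|\geq\|\mu^E\|$) gives $\IndG(U_M\times N,\mu^E_\chi+\mu^F)_0=\IndG(U_{M\times N},\mu^E_\chi+\mu^F)_0$, and this is exactly why the lemma is stated only for the multiplicity of the trivial representation. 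Your second step --- that $\chi\equiv 0$ on $U_{M\times N}\subset V_M\times N$, so $\mu^E_\chi+\mu^F=\mu^E+\mu^F$ there and the two indices over $U_{M\times N}$ coincide --- is correct and agrees with the paper; the missing ingredient is the localisation of the invariant part, which cannot be obtained from Theorem \ref{braverman index}(3) alone.
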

\begin{proof}
Corollary 6.18 in \cite{Paradan01} (see also Theorem 9.6 in \cite{Paradan15}) implies that, for $R$ and $R'$ large enough,
\[
\IndG\bigl(U_M \times N, \mu^E_{\chi} + \mu^F \bigr)_0 = \IndG\bigl(U_{M\times N}, \mu^E_{\chi} + \mu^F \bigr)_0.
\]
This follows from the fact that $U_{M\times N}$ is a neighbourhood of the set of zeroes of $\mu^E_{\chi} + \mu^F$. Here we have used the equivalence of Braverman's index and the index defined by Paradan and Vergne (see Remark \ref{rem other defs}).
Inside $U_{M\times N}$, 
the function $\chi$ equals zero. Hence
\[
\IndG\bigl(U_{M\times N}, \mu^E_{\chi} + \mu^F \bigr) = \IndG\bigl(U_{M\times N}, \mu^E + \mu^F \bigr).
\]
\end{proof}

%Since $\chi \equiv 0$ on $V_M$, we have $\mu^E_{\chi} + \mu^F \equiv \mu^E + \mu^F$ on $V_M \times N$. Hence by Lemma \ref{local equi}, we find that
%\[
%\IndG(U_M \times N, \mu^E_{\chi} + \mu^F)_0 = \IndG(U_{M\times N}, \mu^E + \mu^F)_0.
%\]
%Combining this equality with (\ref{eq-1}) and  (\ref{eq-2}), we conclude that indeed
%\[
% \IndG(M, \mu^E)_{\lambda} = \Ind(M_\lambda) \quad \in \Z.
%\] 

%\begin{lemma}
%One can choose $U_M$ so that
%\begin{equation}
%\label{eq-3}
%\IndG(U_M \times N, \mu^E_{\chi} + \mu^F)_0 = \IndG(U_{M\times N}, \mu^E + \mu^F)_0 \in \Z. 
%\end{equation}
%\begin{proof}
%Let $C$ be a regular value of  $\|\mu^E\|^2 : M  \to \R$ and 
%\[
%U_{M} = \{m \in M \big| \|\mu^E(m)\|^2 < C \} \subseteq M.
%\]
%Here we choose $C \gg R$ so that
%\[
%\overline{U_{M \times N}} \subset U_M \times N. 
%\]
%Since the left-hand side of equation (\ref{eq-3})  does not depend on the choice of $\chi$, we can assume that $\chi \equiv 0$ on $\overline{U_{M \times N}}$. It follows that $\mu^E_{\chi} + \mu^F \equiv \mu^E + \mu^F$ on $U_{M \times N}$. One can finish the proof by Lemma \ref{local equi}. 
%\end{proof}
%\end{lemma}
%
%By (\ref{eq-1}),  (\ref{eq-2}) and  (\ref{eq-3}), we conclude that
%\[
% \IndG(M, \mu^E)_{\lambda} = \Ind(M_\lambda) \in \Z.
%\]
%This concludes the proof of Theorem \ref{main theorem}.

\begin{remark}
In the proof of Lemma \ref{lem U M N}, we used Corollary 6.18 from \cite{Paradan01}. That result states that the invariant part of the index vanishes if the norm of the moment map has a large enough lower bound. This was generalised to multiplicities of arbitrary irreducible representations in Theorem 2.1 in \cite{Zhang14} and Theorem 2.9 in \cite{Paradan11} (in the symplectic setting) and Theorem 3.4 in \cite{Song-spinc} (in the $\Spinc$ setting). Using one of the results in the symplectic setting, one can generalise the definition of the index in Theorem \ref{braverman index} to proper, non-taming moment maps (see Definition 1.3 in \cite{Zhang14} and Definition 2.10 in \cite{Paradan11}). In addition, by using a suitable version of these vanishing results, one can generalise Lemma \ref{lem U M N} to multiplicities of arbitrary irreducible representations. This can then be  used to generalise Theorem \ref{main theorem} to proper, non-taming moment maps. Because our goal was to give a short proof of Vergne's conjecture, we have not included the details of this generalisation in this paper.
%The authors do not see how to deduce the proper, but not necessarily taming, moment map case proved in \cite{Zhang14} from the Vergne conjecture in a straightforward way. If $\mu$ is proper, but possibly non-taming, then the set of zeroes of $X^{\mu}$ has compact connected components. On a neighbourhood of each component, the moment map is taming. But because it is not proper in general, the Vergne conjecture does not directly apply to such neighbourhoods.
\end{remark}

\bibliographystyle{alpha}
\bibliography{mybib}

\begin{thebibliography}{Kaw81}

\bibitem[Bra02]{Braverman02}
Maxim Braverman.
\newblock Index theorem for equivariant {D}irac operators on noncompact
  manifolds.
\newblock {\em $K$-Theory}, 27(1):61--101, 2002.

\bibitem[GS82]{Guillemin82}
Victor Guillemin and Shlomo Sternberg.
\newblock Geometric quantization and multiplicities of group representations.
\newblock {\em Invent. Math.}, 67(3):515--538, 1982.

\bibitem[HS15]{Song-spinc}
Peter Hochs and Yanli Song.
\newblock Equivariant indices of {S}pin$^c$-dirac operators for proper moment
  maps.
\newblock {\em http://arxiv.org/abs/1503.00801}, 2015.

\bibitem[Kaw81]{Kawasaki81}
Tetsuro Kawasaki.
\newblock The index of elliptic operators over {$V$}-manifolds.
\newblock {\em Nagoya Math. J.}, 84:135--157, 1981.

\bibitem[Mei98]{Meinrenken98}
Eckhard Meinrenken.
\newblock Symplectic surgery and the {${\rm Spin}^c$}-{D}irac operator.
\newblock {\em Adv. Math.}, 134(2):240--277, 1998.

\bibitem[MS99]{Meinrenken99}
Eckhard Meinrenken and Reyer Sjamaar.
\newblock Singular reduction and quantization.
\newblock {\em Topology}, 38(4):699--762, 1999.

\bibitem[MZ14]{Zhang14}
Xiaonan Ma and Weiping Zhang.
\newblock Geometric quantization for proper moment maps: the {V}ergne
  conjecture.
\newblock {\em Acta Math.}, 212(1):11--57, 2014.

\bibitem[Par01]{Paradan01}
Paul-{\'E}mile Paradan.
\newblock Localization of the {R}iemann-{R}och character.
\newblock {\em J. Funct. Anal.}, 187(2):442--509, 2001.

\bibitem[Par03]{Paradan03}
Paul-{\'E}mile Paradan.
\newblock {${\rm Spin}^c$}-quantization and the {$K$}-multiplicities of the
  discrete series.
\newblock {\em Ann. Sci. \'Ecole Norm. Sup. (4)}, 36(5):805--845, 2003.

\bibitem[Par11]{Paradan11}
Paul-{\'E}mile Paradan.
\newblock Formal geometric quantization {II}.
\newblock {\em Pacific J. Math.}, 253(1):169--211, 2011.

\bibitem[PV15]{Paradan15}
Paul-\'Emile Paradan and Mich\`ele Vergne.
\newblock Witten non abelian localization for equivariant {$K$}-theory, and the
  {$[Q,R]=0$} theorem.
\newblock {\em http://arxiv.org/abs/1504.07502v1}, 2015.

\bibitem[TZ98]{Zhang98}
Youliang Tian and Weiping Zhang.
\newblock An analytic proof of the geometric quantization conjecture of
  {G}uillemin-{S}ternberg.
\newblock {\em Invent. Math.}, 132(2):229--259, 1998.

\bibitem[TZ99]{TZ99}
Youliang Tian and Weiping Zhang.
\newblock Quantization formula for symplectic manifolds with boundary.
\newblock {\em Geom. Funct. Anal.}, 9(3):596--640, 1999.

\bibitem[Ver07]{MR2334206}
Mich{\`e}le Vergne.
\newblock Applications of equivariant cohomology.
\newblock In {\em International {C}ongress of {M}athematicians. {V}ol. {I}},
  pages 635--664. Eur. Math. Soc., Z\"urich, 2007.

\end{thebibliography}

\end{document}